\documentclass[11pt]{article}
\usepackage{amsmath,amssymb} 
 
\def\t{\,{}^t\!}

\def\Z{{\boldsymbol{Z}}}
\def\Q{{\boldsymbol{Q}}}
\def\R{{\boldsymbol{R}}}

\setlength{\topmargin}{0cm}
\setlength{\oddsidemargin}{0cm}
\setlength{\evensidemargin}{0cm}
\setlength{\textheight}{23cm}
\setlength{\textwidth}{16cm}

\newtheorem{definition}{Definition}[section]
\newtheorem{lemma}[definition]{Lemma}
\newtheorem{proposition}[definition]{Proposition}
\newtheorem{theorem}[definition]{Theorem}
\newtheorem{proof}{Proof.}

\title{On rational points of orthogonal group}
\author{Masanori Kobayashi, Chikara Nakayama}
\date{}

\begin{document}
\maketitle

\begin{abstract}
Let $n$ be a positive integer.
We show 
that a unit rational space vector whose multiple by $n$ is an integer 
vector can be extended to a rational orthonormal basis 
whose all members have the same property. 
\end{abstract}

\section{Main results}
\begin{definition}
A system $S : v_1,\ldots,v_k$ of vectors is called \textbf{orthoregular} if 
$(i)$ $v_i$ and $v_j$ are perpendicular to each other if $i \neq j$, and 
$(ii)$ $|v_i|=|v_j|\not=0$. 
We call $|v_i|$ the \textbf{length} of the system.
\end{definition}

\begin{proposition}
\label{odd}
If $n$ is odd, the length of orthoregular $n$ vectors in $\Z^n$ is an integer.
\end{proposition}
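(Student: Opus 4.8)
The plan is to reduce the statement to a determinant computation followed by an elementary number-theoretic observation. Write $\ell$ for the common length of the system and let $M$ be the $n\times n$ matrix whose rows are $v_1,\dots,v_n$; by hypothesis $M$ has integer entries. Conditions $(i)$ and $(ii)$ in the definition of orthoregular say precisely that $M\t M=\ell^2 I_n$, where $I_n$ is the identity matrix. Taking determinants gives $(\det M)^2=\det(\ell^2 I_n)=\ell^{2n}$, hence $\det M=\pm\,\ell^n$. Since $\det M\in\Z$, we conclude that $\ell^n$ is an integer.

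Next, note that $\ell^2=|v_1|^2$ is a sum of squares of integers, hence a positive integer; call it $m$. Here is the one point where the hypothesis enters: since $n$ is odd, write $n=2k+1$, so that $\ell^n=(\ell^2)^k\ell=m^k\ell$ with $m^k$ a nonzero integer, and therefore $\ell=\ell^n/m^k\in\Q$. Finally, a rational number whose square is the integer $m$ is itself an integer (apply the rational root theorem to $x^2-m$, or observe that $\sqrt m$ is an algebraic integer lying in $\Q$). Hence $\ell\in\Z$, as claimed.

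The argument is essentially this short, and the only real decision is to pass to the matrix $M$ and invoke multiplicativity of the determinant. The sole place where oddness of $n$ is used is the deduction $\ell^n,\ell^2\in\Z\Rightarrow\ell\in\Z$, and this is exactly the step that must break down for $n$ even: already $(1,1),(1,-1)\in\Z^2$ is orthoregular of length $\sqrt2$. I therefore expect the only (very mild) obstacle to be stating this last number-theoretic step carefully, rather than anything in the linear algebra.
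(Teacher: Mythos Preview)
Your argument is correct and is essentially the paper's own proof: form the matrix of the $v_i$, use orthogonality to see $(\det M)^2=\ell^{2n}$ so $\ell^n\in\Z$, combine with $\ell^2\in\Z$ and oddness of $n$ to get $\ell\in\Q$, and finish by integrality of $\Z$ in $\Q$. The only cosmetic difference is that the paper phrases the determinant step as $|v_1,\dots,v_n|=|v_1|\cdots|v_n|=\ell^n$ and cites ``normality of $\Z$'' for the final step, exactly your algebraic-integer remark.
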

\begin{proof}
Let $v_1, \ldots,v_n$ be orthoregular integer vectors and $l$ be its length. 
Then the determinant $|v_1, \ldots, v_n| = |v_1|\cdots|v_n|=l^n$ is an integer since all the entries are integers. 
Since $l^2$ is an integer, we have $l \in \Q$, and $l \in \Z$ by the normality of $\Z$.
\end{proof}

\begin{definition}
Let $k$ be a positive integer less than $n$.
We denote the maximum number of orthoregular integer vectors extending an orthoregular system 
$S \subset \Z^n$ by $E(S)$, 
and the minimum of those $E(S)$ for all orthoregular $S$ with $\sharp S=k$ by $E(n,k)$.
For a subset $K$ of $\R$, 
we also use $E_K(n,k)$ for all orthoregular sets of $k$ integer vectors of length in $K$. 
We mainly use the cases $K=\R$ or $\Z$. 
\end{definition}

\begin{proposition}\label{p:det}
If $n$ is even or $K=\Z$, $E_K(n,n-1)=n$. 
\end{proposition}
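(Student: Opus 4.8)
The plan is to reduce everything to one explicit construction. Since pairwise orthogonal nonzero vectors are linearly independent, an orthoregular system in $\Z^n$ has at most $n$ members, so $E(S)\le n$ for every orthoregular $S\subset\Z^n$; thus $E_K(n,n-1)\le n$, and it suffices to show that every orthoregular system $S:v_1,\dots,v_{n-1}$ of integer vectors with length $l\in K$ extends, by one further integer vector $v_n$, to an orthoregular system $v_1,\dots,v_n$ (which then has length $l$ as well, forcing $E(S)=n$). With $V=\mathrm{span}_{\R}(v_1,\dots,v_{n-1})$, the line $V^{\perp}$ carries the generalized cross product $w\in\Z^n$ whose $j$-th coordinate is $\pm$ the determinant of the $(n-1)\times(n-1)$ matrix obtained from the integer matrix $A$ with rows $v_1,\dots,v_{n-1}$ by deleting the $j$-th column. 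Expanding a determinant with a repeated row shows $v_i\cdot w=0$ for all $i$, and the idea is to take $v_n$ to be a suitable rational multiple of $w$.

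Next I would pin down that multiple and reduce the claim to a divisibility. By Cauchy--Binet, $|w|^2=\det(A\t A)=\det(l^2 I_{n-1})=l^{2(n-1)}$, so $|w|=l^{n-1}$; hence the length requirement forces $v_n:=w/l^{n-2}$, which indeed satisfies $v_n\perp v_i$ for all $i$ and $|v_n|=l$. What remains is to show $v_n\in\Z^n$, i.e.\ that $l^{n-2}\mid w_j$ for every $j$. Deleting the $j$-th coordinate of each $v_i$ gives $\tilde v_1,\dots,\tilde v_{n-1}\in\Z^{n-1}$ whose Gram matrix is $l^2 I_{n-1}-u\t u$, where $u$ is the $j$-th column of $A$; the matrix determinant lemma then yields the key identity
\[
w_j^2=\det\!\bigl(l^2 I_{n-1}-u\t u\bigr)=(l^2)^{n-2}\,\bigl(l^2-|u|^2\bigr).
\]

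Now I would invoke the hypothesis. If $K=\Z$ then $l\in\Z$, and if $n$ is even then $l^{n-2}=(l^2)^{(n-2)/2}\in\Z$ because $l^2=|v_1|^2\in\Z$; so in every case the proposition covers, $l^{n-2}$ is a positive integer. Consequently $w_j/l^{n-2}\in\Q$ and $(w_j/l^{n-2})^2=w_j^2/(l^2)^{n-2}=l^2-|u|^2\in\Z$, whence $w_j/l^{n-2}\in\Z$ by the normality of $\Z$, exactly as in Proposition~\ref{odd}. Therefore $v_n\in\Z^n$, and $v_1,\dots,v_n$ is orthoregular of length $l\ne0$, so $E(S)=n$. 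Since this holds for every admissible $S$, and admissible $S$ exist for $K=\R$ and $K=\Z$ (e.g.\ $e_1,\dots,e_{n-1}$), we conclude $E_K(n,n-1)=n$.

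The real content is the correct choice $v_n=w/l^{n-2}$ together with the minor identity above; once these are in place the integrality is automatic, so I do not expect a serious obstacle. The one point that needs care is the case $l\notin\Z$, which can occur when $n$ is even: then $w/l^{n-2}$ is a priori only a vector over the real quadratic field $\Q(l)$, and it is precisely the evenness of $n-2$ that puts $l^{n-2}$, and hence $v_n$, back into $\Q$.
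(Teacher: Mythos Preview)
Your argument is correct and follows essentially the same route as the paper: form the generalized cross product, compute its length as $l^{n-1}$, and then prove each coordinate is divisible by $l^{n-2}$ by squaring the relevant $(n-1)\times(n-1)$ minor and evaluating the Gram determinant $\det(l^2 I_{n-1}-u\,\t u)=(l^2)^{n-2}(l^2-|u|^2)$. The only cosmetic differences are that the paper phrases the length computation via cofactor expansion rather than Cauchy--Binet, and obtains the minor identity by viewing $\det(l^2 I-u\,\t u)$ as the characteristic polynomial of the rank-one matrix $u\,\t u$ evaluated at $l^2$, rather than invoking the matrix determinant lemma by name.
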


\begin{proof}
Let $v_1,\ldots, v_{n-1}$ be an orthoregular system in $\Z^n$ of length $l$.
Take a square matrix $(v_1, \ldots, v_{n-1}, v)$, where $v$ is an arbitrary vector. 
Let $v_n$ be the vector consists of the cofactors of the last column. 
  Then $v_n$ is orthogonal to $v_1,\ldots,v_{n-1}$, thus $l^{n-1}|v_n|=|v_n|^2$ 
by cofactor expansion. 
Hence $|v_n|=l^{n-1}$. 

We will now show that $(1/l^{n-2})v_n \in \Z^n$, which will complete the proof. 
It is enough to show that the last component of $v_n$ is in $l^{n-2}\Z$ since the others are shown similarly. 
We put $v_j = 
\begin{pmatrix}
v'_j \\ x_j 
\end{pmatrix}$ for $1 \leq j \leq n-1$, where $v'_j$ has $(n-1)$-rows and $x_j$ is a scalar. 
We show that $D := |v'_1, \ldots, v'_{n-1}|$ belongs to $l^{n-2}\Z$. 
Since $l^{n-2} \in \Z$ by the assumption that $n$ is even or $l \in \Z$, 
it is enough to prove that $D^2 \in l^{2(n-2)}\Z$. 
\[
D^2 = |(v'_i,v'_j)_{1\leq i,j\leq n-1}|=|((v_i,v_j)-x_ix_j)_{i,j}|,
\]
which is nothing but $\varphi(l^2)$, where $\varphi(t)$ is the characteristic polynomial for the matrix $(x_ix_j)_{i,j}$. 
Since the matrix has rank $\leq 1$, $\varphi(t)$ has a form $t^{n-2}(t-a)$ $(a \in \Z)$. 
Thus $D^2 \in l^{2(n-2)}\Z$. 
\end{proof}

The next is the main result in this paper.

\begin{theorem}
\label{t:main}
A nonzero integer vector $v$ can be extended to an integral orthoregular basis if and only if $|v|$ is an integer.
In particular, $E_{\Z}(3,1)=3$.
\end{theorem}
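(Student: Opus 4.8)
The plan is to prove the two implications separately and then read off the value of $E_\Z(3,1)$; throughout, an \emph{integral orthoregular basis} means an orthoregular system of three vectors in $\Z^3$. The ``only if'' direction is immediate from Proposition~\ref{odd}: such a basis consists of three orthoregular vectors in $\Z^3$, so its length, which equals $|v|$, is an integer. The substance is the converse, i.e.\ that \emph{every nonzero $v\in\Z^3$ with $l:=|v|\in\Z$ extends to an orthoregular triple of length $l$}. Writing $v=d\,p$ with $d=\gcd(v_1,v_2,v_3)$ and $p$ primitive, we have $|p|^2=l^2/d^2\in\Z$, hence $l/d\in\Z$ by the normality of $\Z$, so $p$ is primitive of integer length $l/d$; since multiplying an orthoregular triple for $p$ by $d$ gives one for $v$, we may assume $v$ primitive. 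Then, working modulo $4$ in $v_1^2+v_2^2+v_3^2=l^2$, one finds that $l$ is odd and exactly one $v_i$ is odd; permuting coordinates (an integral orthogonal operation, to be undone at the end) we arrange that $v_1$ is odd, so $v_2,v_3$ are even and $s:=(l+v_1)/2$ and $t:=(l-v_1)/2$ are nonnegative integers with $s+t=l$. The boundary cases $s=0$ or $t=0$ force $v=\pm e_1$, which is trivial, so we assume $s,t>0$.

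I will produce the desired triple as the columns of the integer matrix $R=R(a,b,c,d)$ classically attached to a Hamilton quaternion $a+bi+cj+dk$: its first column is $(a^2+b^2-c^2-d^2,\ 2(bc+ad),\ 2(bd-ac))$, with analogous formulas for the other two, and $\t R\,R=N^2I$ holds identically with $N=a^2+b^2+c^2+d^2$, so the columns form an orthoregular triple in $\Z^3$ of length $N$. It therefore suffices to find $a,b,c,d\in\Z$ with
\[
a^2+b^2=s,\qquad c^2+d^2=t,\qquad 2(bc+ad)=v_2,\qquad 2(bd-ac)=v_3,
\]
for then $N=s+t=l$ and the first column of $R$ is $(s-t,\,v_2,\,v_3)=v$.

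The heart of the matter is that $s$ and $t$ are each sums of two squares. Indeed, let $\pe\equiv3\pmod4$ be a prime. If $\pe\mid v_2^2+v_3^2=(l-v_1)(l+v_1)$, then $\pe$ divides both $v_2$ and $v_3$, hence not $v_1$ by primitivity, hence exactly one of $l\pm v_1$; since $\pe$ occurs to an even power in the sum of two squares $v_2^2+v_3^2$, this forces both $v_\pe(s)$ and $v_\pe(t)$ to be even. Hence $s=|z_1|^2$ and $t=|z_2|^2$ for some $z_1,z_2\in\Z[i]$, but we must choose the factorizations so that $z_1z_2$ equals the \emph{fixed} Gaussian integer $z:=\tfrac{v_3}{2}+\tfrac{v_2}{2}\,i$, which satisfies $|z|^2=st$. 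This is a bookkeeping argument on the factorization of $z$ in $\Z[i]$: the inert primes $\pe\equiv3\pmod4$ are forced to go entirely to $z_1$ or to $z_2$ and fit by the parity just established, while the powers of $1+i$ and of the split primes may be distributed in any way yielding the prescribed norms. Writing $z_1=b+ai$ and $z_2=d+ci$ then gives $a^2+b^2=s$, $c^2+d^2=t$ and $z_1z_2=(bd-ac)+(bc+ad)i=z$, i.e.\ $2(bd-ac)=v_3$ and $2(bc+ad)=v_2$.

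I expect this final distribution step in $\Z[i]$ — together with pinning down the parity normalization exactly — to be the main obstacle; the rest is formal. Granting it, undoing the coordinate permutation and the scaling by $d$ yields an orthoregular triple in $\Z^3$ of length $|v|$ containing $v$, completing the ``if'' direction. Finally, every $v\in\Z^3$ of integer length extends to such a triple, which is necessarily a basis, and no orthoregular system in $\Z^3$ has more than three members; hence $E_\Z(3,1)=3$. (One could instead invoke Proposition~\ref{p:det} after producing a single further vector $w\perp v$ with $|w|=l$, but the quaternion construction already delivers the whole basis.)
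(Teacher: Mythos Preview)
Your argument is correct and complete in outline; the ``main obstacle'' you flag is genuine but routinely solvable. For an inert prime $q\equiv3\pmod4$ with $q^k\Vert z$ one has $q^{2k}\Vert st$, and since $q$ divides at most one of $s,t$ the factor $q^k$ is forced into the corresponding $z_i$. For $1+i$ note that $s+t=l$ is odd, so exactly one of $s,t$ is even and all copies of $1+i$ go there. For a split $p=\pi\bar\pi$ with $\pi^a\bar\pi^b\Vert z$ one needs $a_1+b_1=v_p(s)$ with $0\le a_1\le a$, $0\le b_1\le b$; since $v_p(s)\le v_p(st)=a+b$ this interval is nonempty. A leftover unit is absorbed into either $z_i$. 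So the distribution goes through and your quaternion matrix $R(a,b,c,d)$ furnishes the whole orthoregular triple.

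This is a genuinely different route from the paper. The paper does \emph{not} build all three vectors at once: it reduces via Proposition~\ref{p:det} to producing a single $w\perp v$ with $|w|=l$, i.e.\ to showing that the induced binary form $Q$ on $K=v^\perp\cap\Z^3$ represents $l^2$. It proves $\det Q=l^2$ by a volume argument, then establishes two lemmas --- Lemma~\ref{l:ortho} (for any $w\perp v$, $v_p(|w|^2)$ is even at primes $p\equiv3\pmod4$) and Lemma~\ref{l:bilin} (a binary form $ax^2+2bxy+cy^2$ with $ac-b^2=l^2$ represents $l^2$ iff $a$ has even valuation at such primes) --- and combines them. Both proofs bottom out in $\Z[i]$-arithmetic, but the paper isolates the number theory into reusable lemmas about binary forms, whereas your approach trades that modularity for an explicit parametrisation (Euler/Cayley via quaternions) that hands you the full basis directly and makes the algorithm especially transparent.
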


\noindent 
Examples.
Here is the list of smallest primitive nonnegative integral space
vectors whose norms are integers.

$\t(0,0,1),
\t(1,2,2),
\t(0,3,4),
\t(2,3,6),
\t(1,4,8), \t(4,4,7),
\t(2,6,9), \t(6,6,7),....$

\noindent
By starting from each vector in the list,
you can always complete it to an orthoregular basis consisting of
integral vectors.
Dividing by its norm, you get a rational orthonormal basis, thus a
rational point of $O(3)$.

\section{Proof of main theorem}

In this section, we prove \ref{t:main}. 
Our proof is constructive and gives an algorithm for all solutions.

First we prove two lemmas. 

\begin{lemma}
\label{l:ortho}
  Let $v,w \in \Z^3\smallsetminus \{\bold o\}$.
  Assume $|v| \in \Z$ and $(v,w)=0$.
  Then $v_p(|w|^2)$ is even for any prime $p$ with $p\equiv3 \bmod 4$. 
  Here $(-,-)$ is the standard inner product and $v_p$ is the normalized valuation with respect to $p$. 
\end{lemma}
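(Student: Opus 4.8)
The strategy is to squeeze $|w|^2$ down to a value of a binary quadratic form, where the classical fact that a prime $p\equiv 3\pmod 4$ occurs to an even power in any sum of two squares becomes available. I would begin with two harmless reductions. If $v=dv_0$ with $v_0$ primitive, then $|v_0|^2$ is an integer (the sum of squares of the entries of $v_0$) and $|v_0|=|v|/d\in\Q$, so $|v_0|\in\Z$ by the normality of $\Z$; since $v_0^{\perp}=v^{\perp}$, I may replace $v$ by $v_0$ and assume $v$ primitive, putting $l:=|v|\in\Z$. Likewise, writing $w=ew_0$ with $w_0$ primitive, one has $v_p(|w|^2)=2v_p(e)+v_p(|w_0|^2)$ and $w_0\in v^{\perp}$, so it suffices to treat primitive $w$. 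Finally, fix a prime $p\equiv 3\pmod 4$.

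Next I would study the rank-two lattice $\Lambda:=v^{\perp}\cap\Z^3$ with the form induced by $(-,-)$. Since $v$ is primitive, $u\mapsto(u,v)$ is a surjection $\Z^3\to\Z$ with kernel $\Lambda$, so $\Z^3/\Lambda\cong\Z$; hence $\Lambda$ is a direct summand of $\Z^3$, and $\Z v\oplus\Lambda$ has index $(v,v)=l^2$ in $\Z^3$. For any $\Z$-basis $(e_1,e_2)$ of $\Lambda$, the Gram matrix of $(v,e_1,e_2)$ is block diagonal with determinant $l^2\cdot\det(\mathrm{Gram}(e_1,e_2))$, and this must equal $[\Z^3:\Z v\oplus\Lambda]^2=l^4$; therefore the Gram matrix of every $\Z$-basis of $\Lambda$ has determinant $l^2$. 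Because $w$ is primitive in $\Z^3$ and lies in the direct summand $\Lambda$, it is primitive in $\Lambda$, hence part of a $\Z$-basis $(w,w')$ of $\Lambda$. With $m:=|w|^2$, $b:=(w,w')$, $c:=|w'|^2$, the Gram matrix $A=\begin{pmatrix}m&b\\ b&c\end{pmatrix}$ satisfies $\det A=mc-b^2=l^2$, i.e.
\[
mc=l^2+b^2 .
\]

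Now suppose for contradiction that $v_p(|w|^2)=v_p(m)$ is odd. Since $p\equiv 3\pmod 4$, $-1$ is not a square modulo $p$, so $v_p(\alpha^2+\beta^2)=2\min(v_p\alpha,v_p\beta)$ is even for all $\alpha,\beta\in\Z_p$ not both zero. Applying this to $mc=l^2+b^2$: the valuation $v_p(mc)=v_p(m)+v_p(c)$ is even and $\ge v_p(m)$, which is odd, so $v_p(c)\ge 1$ and $v_p(mc)\ge v_p(m)+1\ge 2$, whence $\min(v_p l,v_p b)\ge 1$, i.e. $p\mid l$ and $p\mid b$. Thus $p$ divides $m$, $b$, and $c$, so $A\equiv 0\pmod p$. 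But $\Lambda$ is a direct summand of $\Z^3$, so $(w,w')$ extends to a $\Z$-basis $(w,w',u)$ of $\Z^3$; the Gram matrix of this basis is $\t P P$ for some $P\in GL_3(\Z)$, hence has determinant $1$, yet modulo $p$ its upper-left $2\times 2$ block is $A\equiv 0$, which forces the determinant to be $\equiv 0\pmod p$ --- a contradiction. (Equivalently, $\Lambda\otimes\mathbb{F}_p$ would be a two-dimensional subspace of the nondegenerate quadratic space $(\mathbb{F}_p^3,x^2+y^2+z^2)$ lying inside its own orthogonal complement, which cannot happen.) Hence $v_p(|w|^2)$ is even.

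The main obstacle is the case $p\mid l$: there $|w|^2$ is a genuine sum of three squares and the crude ``valuation of a sum is the least valuation'' heuristic yields nothing, so one must show the induced binary form is not divisible by $p$. The lever is the structural fact that $v^{\perp}\cap\Z^3$ is a \emph{primitive} sublattice of the unimodular lattice $\Z^3$: this pins its discriminant to $-4l^2$, giving the identity $mc=l^2+b^2$, and it simultaneously (via the Gram-determinant argument) prevents the form from degenerating to zero modulo $p$. Everything else --- the two reductions and the valuation computation for $x^2+y^2$ over $\Z_p$ --- is routine.
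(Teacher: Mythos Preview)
Your proof is correct, but it takes a genuinely different route from the paper's. The paper argues by brute-force algebra: writing $v=(a,b,c)^t$, $w=(x,y,z)^t$ with $a\neq0$, it eliminates $x$ via the orthogonality relation, then completes the square to obtain the identity
\[
a^2(a^2+b^2)\,|w|^2 \;=\; \bigl((a^2+b^2)y+bcz\bigr)^2 + (alz)^2,
\]
a sum of two integer squares; since $v_p\bigl(a^2(a^2+b^2)\bigr)$ is also even, the conclusion follows immediately. Your approach is lattice-theoretic: you reduce to primitive $v,w$, show that the induced binary form on $\Lambda=v^{\perp}\cap\Z^3$ has discriminant $-4l^2$ (so $mc=l^2+b^2$), and then use primitivity of $\Lambda$ in the unimodular lattice $\Z^3$ to rule out $A\equiv 0\pmod p$. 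The paper's method is shorter and entirely elementary, needing no structural facts about sublattices; your method is more conceptual and in fact anticipates the ``$\det A=l^2$'' Claim that the paper proves separately in the main theorem, so it effectively folds part of the later argument into this lemma.
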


\begin{proof}
  Let $v=
\begin{pmatrix}
a \\ b \\ c
\end{pmatrix}, 
w=
\begin{pmatrix}
x \\ y \\ z
\end{pmatrix}$
and let $l=|v|$.
  We may assume $a\not=0$. 
  Since $ax+by+cz=0$, we have $x=\frac{-by-cz}a$.
   Hence 
$$|w|^2=\left(\frac{by+cz}a\right)^2+y^2+z^2,$$
$$a^2|w|^2=(a^2+b^2)y^2+2bcyz+(a^2+c^2)z^2.$$
Multiplying $a^2+b^2$, we deduce 
$$a^2(a^2+b^2)|w|^2=((a^2+b^2)y+bcz)^2+((a^2+c^2)(a^2+b^2)-b^2c^2)z^2.$$
  But the coefficient of $z^2$ in the right-hand-side coincides with $a^2l^2$, 
so that the both terms of the right-hand-side are perfect squares.
  Hence $v_p$ of the right-hand-side is even for any $p$
with $p\equiv3 \bmod 4$. 
  Since $v_p(a^2(a^2+b^2))$ is also even, $v_p(|w|^2)$ is even, too. 
\end{proof}

\begin{lemma}
\label{l:bilin}
   Let $a,b,c,l \in \Z$ satisfying $ac-b^2=l^2$. 
  Then there exist $x,y \in \Z$ such that $ax^2+2bxy+cy^2=l^2$ if and only if 
$a\geq0$ and, when $a>0$, $v_p(a)$ is even for any prime $p$ with $p\equiv3 \bmod 4$. 
\end{lemma}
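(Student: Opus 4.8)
\noindent\textit{Proof proposal.}

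The plan is to convert the solvability of $ax^2+2bxy+cy^2=l^2$ into a divisibility question in the Gaussian integers $\Z[i]$, the main mechanical tool being the completed-square identity
\[
aQ(x,y)=(ax+by)^2+(ac-b^2)y^2=(ax+by)^2+l^2y^2,\qquad Q(x,y):=ax^2+2bxy+cy^2 .
\]
Throughout I assume $l\neq0$ (the relevant case); then $ac=b^2+l^2>0$. For the ``only if'' direction, suppose $Q(x_0,y_0)=l^2$. The identity gives $al^2=(ax_0+by_0)^2+l^2y_0^2$, so $a\geq0$, and $a=0$ is impossible since it would force $b=l=0$. Moreover $l^2\mid(ax_0+by_0)^2$, so $l\mid ax_0+by_0$; putting $m:=(ax_0+by_0)/l\in\Z$ and substituting back gives $m^2=a-y_0^2$, i.e.\ $a=m^2+y_0^2$. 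Thus the positive integer $a$ is a sum of two squares, and the classical two-squares theorem yields that $v_p(a)$ is even for every prime $p\equiv3\bmod4$.

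For the ``if'' direction I would first establish the reduction: \emph{it suffices to find $\alpha\in\Z[i]$ with $N(\alpha)=a$ and $\alpha\mid\beta$ in $\Z[i]$}, where $N$ is the norm and $\beta:=b+li$, so that $N(\beta)=b^2+l^2=ac$. Indeed, write $\alpha=m+y_0i$ and $\beta=\alpha(u+vi)$ with $u,v\in\Z$; comparing real and imaginary parts gives $b=mu-y_0v$ and $l=mv+y_0u$, whence a one-line computation yields $av+by_0=ml$, and then the identity above gives $aQ(v,y_0)=(ml)^2+l^2y_0^2=l^2a$, i.e.\ $Q(v,y_0)=l^2$. (One checks in the same spirit that this condition is also necessary, so the two characterisations agree; only sufficiency is needed below.)

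It then remains to construct such an $\alpha$ given that $a>0$ and $v_p(a)$ is even for all $p\equiv3\bmod4$. Since $\Z[i]$ is a principal ideal domain, I would build $\alpha$ prime by prime from the factorisation of $\beta$: take $(1+i)^{v_2(a)}$ at $p=2$; for $p\equiv1\bmod4$ write $p=\pi\bar\pi$, put $j:=v_\pi(\beta)$, $k:=v_{\bar\pi}(\beta)$, and take $\pi^{j'}\bar\pi^{k'}$ with $j':=\min(j,v_p(a))$ and $k':=v_p(a)-j'$; for inert $p\equiv3\bmod4$ take $p^{v_p(a)/2}$. In each case the chosen exponent does not exceed the multiplicity of the corresponding prime of $\Z[i]$ in $\beta$: via the norm this reduces to the inequality $v_p(ac)=v_p(a)+v_p(c)\geq v_p(a)$, which holds because $c\in\Z$ (and, when $p\equiv3$, one also uses that $v_p(a)$ is even, so that $v_p(a)/2$ is an honest integer exponent). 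Hence $\alpha$, the product of these local factors, divides $\beta$, while $N(\alpha)=2^{v_2(a)}\prod_{p\neq2}p^{v_p(a)}=a$ since $a>0$. This produces the required $\alpha$, completing the proof.

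I expect the main obstacle to be this final construction rather than the reformulation: being a sum of two squares only guarantees \emph{some} Gaussian integer of norm $a$, whereas we need one that divides $\beta$, and the content of the argument is that such a divisor can always be assembled — which works precisely because $c$ is an ordinary integer, so its $p$-adic valuations are nonnegative. Should one instead proceed by direct manipulation of the form together with square roots of $-1$ modulo $a$, that is the same bookkeeping in a different language, and matching the ``right'' representation $a=m^2+y_0^2$ to $\beta$ would still be the heart of the matter.
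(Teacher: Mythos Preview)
Your proof is correct and follows essentially the same approach as the paper: both complete the square to reduce the problem to producing a Gaussian integer of norm $a$ with a suitable divisibility relation to $l+ib$ (equivalently $b+li$), and both construct this element prime by prime in $\Z[i]$. Your packaging via the condition $\alpha\mid\beta$ is a bit cleaner than the paper's (which instead asks that $a$ divide $ul-by=\mathrm{Re}((u+iy)(l+ib))$ and then sets $x=(ul-by)/a$), and your local choice $j'=\min(j,v_p(a))$, $k'=v_p(a)-j'$ at split primes sidesteps the paper's auxiliary claim about exponents $a_j,b_j$, but the substance is the same.
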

\begin{proof}
  It is probably well-known.
  Our proof here uses the prime factorization in the integer ring $\Z[i]$, where 
$i=\sqrt{-1}$.

  We prove the only if part. 
  By multiplying $a$ with the given equation, we have $(ax+by)^2+(ly)^2=al^2$.
  Hence $l$ divides $ax+by$. 
  Letting $u=\frac{ax+by}l$, we have $u^2+y^2=a$, which implies that the desired condition holds. 

  We prove the if part.  
  The case $a=0$ is trivial. 
  Assume $a>0$. 
  If there are $u$ and $y$ with $u^2+y^2=a$ such that $a$ divides $ul-by$, then 
$x=\frac{ul-by}a$ and $y$ give an integer solution for $ax^2+2bxy+cy^2=l^2$.
  We prove the existence of $u$ and $y$ as above under the assumption that $v_p(a)$ is even for 
any prime $p$ with $p\equiv3 \bmod 4$. 
  First factorize $a$ into 
$$a=2^m
\prod_{p_j\equiv1 \bmod 4}p_j^{m_j}
\prod_{q_k\equiv3 \bmod 4}q_k^{n_k},$$
where $p_j$'s are primes with $p_j\equiv1 \bmod 4$ and 
$q_k$'s are primes with $q_k\equiv3 \bmod 4$.
  Here, each $n_k$ is even by the assumption.
  Since $l^2+b^2=ac$, we can write $l+ib$ as 
a product 
$$l+ib=u_0(1+i)^{m'}
\prod p_j^{s_j}\pi_j^{t_j}
\prod q_k^{n'_k/2}$$
in $\Z[i]$, where 
$u_0$ is a unit of $\Z[i]$, $m' \geq m$, $p_j=\pi_j\overline \pi_j$ in $\Z[i]$, 
where $\pi_j$ and $\overline \pi_j$ are conjugate prime elements, 
$2s_j + t_j \geq m_j$, 
and $n'_k \geq n_k$ is an even. 

\medskip

\noindent Claim.
  There exist $a_j \geq0, b_j \geq0$ such that $s_j + \min(t_j, b_j) \geq a_j + b_j$ 
and $2a_j +b_j = m_j$. 

\medskip

\noindent Proof of Claim. 
  If $2s_j  \leq m_j$, let $a_j=s_j$ and $b_j = m_j -2s_j$. 
  Then, we have $t_j \geq m_j -2s_j=b_j$ and $a_j$, $b_j$ satisfy the condition. 

  If $2s_j > m_j$, let $a_j := \lfloor \frac{m_j}2\rfloor$ 
and $b_j=m_j-2\lfloor \frac{m_j}2\rfloor$.
  Then, we have $b_j \leq 1$ and $s_j \geq a_j +1 \geq a_j + b_j$ and $a_j$, $b_j$ satisfy the condition. 
  This completes the proof of Claim.

\medskip

  Using these $a_j$ and $b_j$, define 
$$u+iy:=(1+i)^m\prod p_j^{a_j}\overline \pi_j^{b_j} \prod q_k^{n_k/2}.$$
  Then, since $2a_j+b_j=m_j$, we have $u^2+y^2=a$.
  Further, $ul-by$, the real part of $(u+iy)(l+ib)$, is divided by 
$2^m\prod p_j^{a_j+s_j+\min(b_j,t_j)}\prod q_k^{n_k}$.
  Since $a_j+s_j+\min(b_j,t_j) \geq 2a_j + b_j = m_j$, this is divided by $a$. 
\end{proof}

  Now we prove \ref{t:main}. 
  The only if part is by \ref{odd}.
  We prove the if part. 
  By \ref{p:det}, it is enough to show $E_{\Z}(3,1) \geq 2$. 
  Let $v_1=\begin{pmatrix}
a_1 \\ b_1 \\ c_1
\end{pmatrix}\in \Z^3\smallsetminus\{\bold o\}$ such that 
$l:=|v_1|$ is an integer.
  It suffices to show $E_{\Z}(\{v_1) \geq 2$. 
  Since $m:=(a_1,b_1,c_1)$ divides $l$, replacing $v_1$ with $\frac {v_1}m$, 
we may and will assume that $m=1$. 
  Then, the homomorphism $(v_1,-)\colon \Z^3 \to \Z$ is surjective. 
  The kernel $K$ of this surjection is free of rank two. 
  Let $Q$ be the positive definite bilinear form on $K$ obtained by 
restricting the standard metric on $\Z^3$. 
  It is enough to show that $Q$ represents $l^2$. 
  Take a base $w_1, w_2$ of $K$.
  Let $A:=$Gram($w_1,w_2$), the symmetic matrix which represents $Q$ 
with respect to $w_1, w_2$.

\medskip

\noindent 
Claim. 
$\det(A)=l^2$. 

\medskip

\noindent 
Proof of Claim. 
  Let $S$ be the area of the parallelogram spanned by $w_1, w_2$. 
  What we have to see is $S=l$. 
  Since there are isomorphisms $\Z^3/\langle v_1,w_1,w_2\rangle \cong
\Z/(v_1,v_1)\Z\cong
\Z/l^2\Z$, the volume of the parallelepiped spanned by $v_1, w_1, w_2$ 
is $l^2$. 
  On the other hand, since $v_1$ is perpendicular to $K$, 
this volume is $|v_1|S$, too.
  Hence $S=l$. 
  This completes the proof of Claim.

\medskip

  Let $a:=(w_1,w_1)$. 
  By \ref{l:ortho} with $v=v_1$ and $w=w_1$, 
  $v_p(a)$ is even for any prime $p$ with $p\equiv3 \bmod 4$.
  Hence by \ref{l:bilin} with $a=(w_1,w_1)$, $b=(w_1,w_2)$, $c=(w_1,w_2)$, 
we conclude that $Q$ represents $l^2$, which completes the proof of \ref{t:main}.

\section{More results}

Here we gather more results on $E_K(n,k)$. 

\begin{proposition}
For $0 < k < n$ and let $K$ be a subset of $\R$.
\begin{enumerate}
\item $E_{\Z}(n+1,k+1) \leq E_{\Z}(n,k)+1$. 
\item If $n$ is odd, then $E(n,k) \leq n-1$. 
\item If $n$ is odd and each coordinate of $v \in \Z^n$ is odd, then $E(v)=1$. 
\item If $n$ is odd, then $E(n,1)=1$. 
\item If $n \equiv 1 \bmod 8$, then $E_{\Z}(n,1)=1$. 
\item If $n$ is the sum of positive integers $n_i$, $E_K(n,1) \geq \min_i E(n_i,1)$. 
\item For a positive integer $m$, 
$E_K(mn,1) \geq E(n,1)$, $E_K(2m,1)\geq 2$, $E_K(4m,1)\geq 4$ and $E_K(8m,1) \geq 8$. 
\end{enumerate}
\end{proposition}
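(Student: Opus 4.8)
The plan is to settle items (1)--(5) by direct constructions, to prove (6) --- which carries the real content --- by a block-by-block extension, and then to deduce (7) from (6) together with the classical norm-preserving multiplications in dimensions $2,4,8$.

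For (1) I would take an orthoregular system $S\colon v_1,\dots,v_k\subset\Z^n$ of integral length $l$ with $E(S)=E_\Z(n,k)$, append a zero coordinate to each $v_i$, and adjoin the vector $l\,\mathbf e_{n+1}\in\Z^{n+1}$; this gives an orthoregular integral system of $k+1$ vectors of length $l\in\Z$. Since an orthoregular integer vector perpendicular to $l\,\mathbf e_{n+1}$ has vanishing last coordinate, the orthoregular integral completions of the new system in $\Z^{n+1}$ are exactly those of $S$ in $\Z^n$, so its $E$-value is $E(S)+1$ and $E_\Z(n+1,k+1)\le E_\Z(n,k)+1$. For (2), writing $n=2s+1$, I would use the $n-1$ pairwise orthogonal vectors $\mathbf e_{2i-1}\pm\mathbf e_{2i}$ $(1\le i\le s)$, all of length $\sqrt2$: any $k$ of them form an orthoregular integral $k$-system of non-integral length, which by Proposition~\ref{odd} cannot be completed to $n$ orthoregular integer vectors, so $E(n,k)\le n-1$. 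For (3), if all coordinates of $v$ are odd and $w$ is orthoregular with $v$, then reducing $(v,w)=0$ modulo $2$ forces $w$ to have an even number of odd coordinates, hence $|w|^2$ is even, while $|v|^2\equiv n$ is odd --- impossible; and (4) is the case $v=(1,\dots,1)$. For (5), writing $n-1=8t$, the all-odd vector $(2t-1,1,\dots,1)$ satisfies $|v|^2=(2t-1)^2+8t=(2t+1)^2$, so it has integral norm and is a witness for (3); the only thing to verify is this Pythagorean identity.

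The crux is (6). Given $v\ne0$ with $|v|\in K$, decompose $\Z^n=\bigoplus_i\Z^{n_i}$ and write $v=(v^{(1)},\dots,v^{(r)})$; set $p:=\min_i E(n_i,1)$. For each $i$ with $v^{(i)}\ne0$ the definition of $E(n_i,1)$ lets me extend $v^{(i)}$ to an orthoregular integral system $v^{(i)}=w^{(i)}_1,\dots,w^{(i)}_p$ in $\Z^{n_i}$; for $i$ with $v^{(i)}=0$ I set $w^{(i)}_j:=0$. The key move is then to collect the $j$-th vectors across all blocks: $u_j:=(w^{(1)}_j,\dots,w^{(r)}_j)$ for $1\le j\le p$, so that $u_1=v$. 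Because each block extension preserves the norm of its block and is internally orthogonal, a componentwise computation gives $|u_j|^2=\sum_i|v^{(i)}|^2=|v|^2$ and $(u_j,u_k)=0$ for $j\ne k$; hence $u_1,\dots,u_p$ is an orthoregular integral system containing $v$ and $E(\{v\})\ge p$. I expect the only fuss here to be the bookkeeping when some $v^{(i)}$ vanish (absorbed by the zero vectors) and the degenerate index $n_i=1$, where $E(1,1)=1$ trivially.

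Finally, (7) falls out of (6): applying it to $mn=n+\cdots+n$ gives $E_K(mn,1)\ge E(n,1)$, and specializing $n$ to $2,4,8$ reduces the remaining claims to $E(2,1)\ge2$, $E(4,1)\ge4$, $E(8,1)\ge8$. These are supplied by the standard integral multiplications: for $v\in\Z^2$ take $v$ and its rotation by a right angle; for $v\in\Z^4$ identify $\Z^4$ with the Lipschitz quaternions $\Z\oplus\Z i\oplus\Z j\oplus\Z k$ and take $v,vi,vj,vk$; for $v\in\Z^8$ identify $\Z^8$ with the integral octonions $\Z e_0\oplus\cdots\oplus\Z e_7$ and take $ve_0,\dots,ve_7$ --- in each case the polarized norm identity $(va,vb)=|v|^2(a,b)$ makes orthoregularity immediate, and the images are integral since the relevant multiplications are by signed permutation matrices. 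Of the seven items only (6) needs an idea; once the ``take the $j$-th vector in every block'' trick is found, everything else is routine.
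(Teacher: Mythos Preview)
Your argument is correct and matches the paper's proof item by item: the same embedding trick for (1), the same length-$\sqrt2$ vectors and appeal to Proposition~\ref{odd} for (2), the same parity contradiction for (3)--(4), the same block-by-block recombination for (6), and the same reduction via (6) to the composition-algebra multiplications in dimensions $2,4,8$ for (7). The one place you differ is (5): your all-odd witness $(2t-1,1,\dots,1)$ with $n=8t+1$, giving $|v|^2=(2t-1)^2+8t=(2t+1)^2$, is cleaner than the paper's choice of vectors of the shape $(3,\dots,3,1,\dots,1)$, which requires an extra inequality to verify that the number of $3$'s does not exceed $n$.
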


\noindent 
Question.
  Is $E_K(16m,1) \geq 16$ valid?

\begin{proof}
$(1)$ Take an orthoregular system $S:v_1, \ldots, v_k$ of  vectors which attains $E_\Z(n,k)=E(S)$. Let $l$ be the length of $S$. 
Then 
$E \left(
\begin{pmatrix}
l \\ 0 \\
\end{pmatrix}, 
\begin{pmatrix}
0 \\ v_1 \\
\end{pmatrix}, \ldots, 
\begin{pmatrix}
0 \\ v_k \\
\end{pmatrix} \right)
= E(v_1,\ldots, v_k)+1$. 

$(2)$ It is enough to show that $E(v_1)\leq n-1$ for $v_1 = \t (1 \ 1 \ 0 \ \cdots \ 0) \in \Z^n$. 
This follows from $\ref{odd}$. 
(In fact, $E(v_1)=n-1$. 
  This is seen by taking $v_2= \t (1 \ -1 \ 0 \ \cdots \ 0 ), \ldots, v_{2k-1}=\t (0 \ \cdots \ 0 \ 1 \ 1 \ 0 \ \cdots \ 0 ), v_{2k}=\t (0 \ \cdots \ 0 \ 1 \ -1 \ 0 \ \cdots \ 0),\ldots$ $(1 \leq k \leq (n-1)/2)$.
Here the $i$-th components of $v_{2k-1}$ and of $v_{2k}$ are zero unless $i=2k-1,2k$.) 

$(3)$ Suppose that an integer vector $v' = \t ( a_1 \ \cdots \ a_n)$ satisfies $(v,v')=0$ and $|v|=|v'|$. 
By taking modulo two, we have $|v|^2=|v'|^2=a_1^2+\cdots+a_n^2 \equiv a_1 + \cdots + a_n$. 
Since the components of $v$ are odd, the last integer is congruent to $(v,v')=0$, 
that is, $|v|^2 \equiv0 \bmod 2$. 
  On the other hand, we have $|v|^2 \equiv n \equiv 1 \bmod 2$. 
A contradiction.

$(4)$ Take $v = \t(1 \cdots 1)$ and apply $(3)$. 

$(5)$ Use $(3)$ and the following Claim.
  (The part $(b) \Rightarrow (a)$ suffices.)

\medskip

\noindent 
Claim. For an odd integer $n>0$, the followings are equivalent. 

$(a)$ There exist $n$ odd integers whose square sum is again square. 

$(b)$ $n \equiv 1 \mod 8$. 

\medskip

\noindent 
Proof of Claim. 
$(a) \Rightarrow (b)$. 
  Assume that there are $x_1,\ldots,x_n,y \in \Z$ such that 
$x_1^2+\cdots+ x_n^2=y^2$. 
  Since the square of any odd number is congruent to $1$ modulo $8$, 
we have $n \equiv x_1^2+\cdots+ x_n^2=y^2 \bmod 8$.
  Since $y$ is also odd, this implies $n \equiv 1 \bmod 8$. 

$(b) \Rightarrow (a)$. 
  Take $m \geq1$ such that $(2m-1)^2 \leq n \leq (2m+1)^2$. 
  If we prove $n \geq \frac {(2m+1)^2-n}8$, 
then, $(x_1,\ldots, x_n)=(3,\ldots,3,1\ldots,1)$ will do, 
where the number of $3$ is $\frac {(2m+1)^2-n}8$, 
for $x_1^2+\cdots+ x_n^2=n+(9-1)\frac {(2m+1)^2-n}8=(2m+1)^2$.

  We prove the above inequality.  
  This is equivalent to $9n \geq (2m+1)^2$, which reduces to 
$9(2m-1)^2 \geq (2m+1)^2$, $3(2m-1) \geq 2m+1$, $m\geq1$, 
which completes the proof of Claim.

$(6)$ Let $v \in \Z^n\smallsetminus\{\bold o\}$. 
  By the isomorphism $\Z^n \cong \Z^{n_1} \oplus \Z^{n_2}\oplus \cdots$, we regrad $v$ as 
a collection $v_1, v_2,\ldots$ with $v_i \in \Z^{n_i}$. 
  Let $m:=\min_iE(n_i,1)$.
  For each $i$, if $v_i$ is not zero, 
apply $E(n_i,1)$ to $v_i$, and we get an orthoregular set $v_{i1}=v_i,v_{i2},\ldots, v_{im}$. 
  If $v_i$ is zero, let $v_{i1}=v_{i2}=\cdots=v_{im}$ be zero. 
  For each $j=2,\ldots, m$, let $v_j \in \Z^n$ be the vector corresponding to $(v_{ij})_i$.
  Then, $v_1,v_2,\ldots, v_m$ is the desired system. 

$(7)$ By $(6)$, we may assume that $m=1$.  
  Then this is by the next proposition with $K=\R$.
\end{proof}

\begin{proposition}
Let $K$ be an arbitrary subset of $\R$ satisfying $-K=K$. 
If $n=2,4$ or $8$, then $E_K(n,1)=n$ for any $K$. 
\end{proposition}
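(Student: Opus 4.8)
The plan is to exploit the multiplicative structure carried by $\R^n$ for $n=1,2,4,8$, namely the classical real composition algebras. The bound $E_K(n,1)\le n$ is immediate, since the members of an orthoregular system in $\R^n$ are linearly independent and hence are at most $n$ in number; so everything reduces to proving $E(\{v\})\ge n$ for every $v\in\Z^n\smallsetminus\{\bold o\}$ when $n\in\{2,4,8\}$, after which $E_K(n,1)=n$ for any $K$ (the hypothesis $-K=K$ does no work here, a length being nonnegative). For $n=2$ this lower bound is already a special case of Proposition \ref{p:det} (apply it with $n$ even and $k=n-1=1$), so only $n=4$ and $n=8$ require a genuinely new argument, although the construction below treats all three cases uniformly.

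First I would fix $n\in\{2,4,8\}$ and identify $\R^n$ with the corresponding composition algebra $A$ over $\R$ (the complex numbers $\C$, the quaternions $\mbi H$, or the octonions $\mbi O$), choosing the standard basis $e_1,\dots,e_n$ so that $e_1$ is the two-sided unit. I will use three classical properties of $A$: its multiplication is $\R$-bilinear; it is norm-multiplicative, $|xy|=|x|\,|y|$; and its structure constants are integral, i.e.\ $e_ie_j\in\{0,\pm e_1,\dots,\pm e_n\}$, so that the product restricts to a map $\Z^n\times\Z^n\to\Z^n$. Polarizing the identity $|x(a+b)|^2=|x|^2|a+b|^2$ in the second argument then gives $(xa,xb)=|x|^2(a,b)$ for all $x,a,b\in A$.

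Next, given $v\in\Z^n\smallsetminus\{\bold o\}$, I would set $w_i:=v e_i$ for $i=1,\dots,n$. Integrality of the structure constants gives $w_i\in\Z^n$; norm-multiplicativity gives $|w_i|=|v|\,|e_i|=|v|\ne0$; and the polarized identity gives $(w_i,w_j)=|v|^2(e_i,e_j)=0$ for $i\ne j$. Hence $w_1,\dots,w_n$ is an orthoregular system of $n$ integer vectors, and since $w_1=v e_1=v$ it extends $\{v\}$; therefore $E(\{v\})\ge n$, which with the first paragraph finishes the proof.

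The point needing care is that the three ``classical properties'' above really do hold in the sharp form used, the only nonelementary ingredient being norm-multiplicativity of octonion multiplication (Degen's eight-square identity) together with the integrality of the octonion multiplication table; this is the expected main obstacle. Non-associativity of $\mbi O$ causes no trouble, since the argument only ever multiplies by the single fixed element $v$ on one side. If one prefers to avoid the algebra language, the same proof can be phrased by exhibiting explicit $4\times4$ and $8\times8$ integer matrices whose first column is $v$ and whose columns are mutually orthogonal of length $|v|$---the matrices underlying the Euler four-square and Degen eight-square identities---but that merely relocates the same identity into a direct verification.
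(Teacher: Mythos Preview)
Your proposal is correct and follows essentially the same route as the paper: use the composition-algebra structure on $\R^2,\R^4,\R^8$ and multiply $v$ by the standard basis elements to produce the orthoregular extension. The paper's proof is simply the explicit-matrix version you mention at the end (writing out $iv_1,jv_1,kv_1$ for the quaternions and the analogous octonion products), whereas you package the same construction via the polarized norm identity $(xa,xb)=|x|^2(a,b)$; the content is identical.
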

\begin{proof}
  Let $v_1 \in K^n$. 
  Via a standard matrix representation of complex number field, quarternion algebra or Cayley algebra as an $\R$-algebra, 
the rotations multiplied by the unit imaginary numbers gives the desired $v_2,\ldots, v_n$. 
  Explicitly, for 
$v_1=
\begin{pmatrix}
a_0 \\ a_1 \\
\end{pmatrix}$, 
we give 
$v_2=iv_1=
\begin{pmatrix}
-a_1 \\ a_0 \\
\end{pmatrix};$ for 
$v_1=
\begin{pmatrix}
a_0 \\ a_1 \\ a_2 \\ a_3
\end{pmatrix}$, 
we give 
$(v_2,v_3,v_4)=(iv_1,jv_1,kv_1)=
\left(
\begin{pmatrix}
-a_1 \\ a_0 \\ -a_3 \\ a_2 
\end{pmatrix}, 
\begin{pmatrix}
-a_2 \\ a_3 \\ a_0 \\ -a_1 
\end{pmatrix}, 
\begin{pmatrix}
-a_3 \\ -a_2 \\ a_1 \\ a_0 
\end{pmatrix}
\right);$ for 
$v_1=
\begin{pmatrix}
a_0 \\ a_1 \\ a_2 \\ a_3 \\ a_4 \\ a_5 \\ a_6 \\ a_7
\end{pmatrix}$, 
we give 
$$(v_2,v_3,v_4,v_5,v_6,v_7,v_8)=
\left(
\begin{pmatrix}
-a_1 \\ a_0 \\ -a_3 \\ a_2 \\ -a_5 \\ a_4 \\ a_7 \\ -a_6
\end{pmatrix}, 
\begin{pmatrix}
-a_2 \\ a_3 \\ a_0 \\ -a_1 \\ -a_6 \\ -a_7 \\ a_4 \\ a_5
\end{pmatrix}, 
\begin{pmatrix}
-a_3 \\ -a_2 \\ a_1 \\ a_0 \\ -a_7 \\ a_6 \\ -a_5 \\ a_4 
\end{pmatrix},
\begin{pmatrix}
-a_4 \\ a_5 \\ a_6 \\ a_7 \\ a_0 \\ -a_1 \\ -a_2 \\ -a_3 
\end{pmatrix},
\begin{pmatrix}
-a_5 \\ -a_4 \\ a_7 \\ -a_6 \\ a_1 \\ a_0 \\ a_3 \\ -a_2 
\end{pmatrix},
\begin{pmatrix}
-a_6 \\ -a_7 \\ -a_4 \\ a_5 \\ a_2 \\ -a_3 \\ a_0 \\ a_1
\end{pmatrix},
\begin{pmatrix}
-a_7 \\ a_6 \\ -a_5 \\ -a_4 \\ a_3 \\ a_2 \\ -a_1 \\ a_0 
\end{pmatrix}
\right).$$
\end{proof}

\begin{proposition}
$E(6,k) \leq 4$ for $1 \leq k \leq 4$. 
\end{proposition}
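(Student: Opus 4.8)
The plan is to prove the bound by producing, for each $k$ with $1\le k\le 4$, a single orthoregular system $S$ of $k$ vectors in $\Z^6$ with $E(S)\le 4$; by the definition of $E(6,k)$ this is enough. Apply the construction of the preceding Proposition to the first four coordinates of $v_1=\t(1,1,1,0,0,0)\in\Z^6$: this produces an orthoregular system $v_1,iv_1,jv_1,kv_1$ of four vectors of length $\sqrt3$ in $\Z^6$ (all with last two coordinates $0$), and I take $S$ to be any subsystem of size $k$. Since the members of an orthoregular system share a common nonzero length, every orthoregular system in $\Z^6$ containing $S$ consists of vectors of length $\sqrt3$. So the whole proposition reduces to the following assertion: there is \emph{no} orthoregular system of five vectors of length $\sqrt3$ in $\Z^6$.

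First I would pass from five to six: since $n=6$ is even, Proposition~\ref{p:det} gives $E_{\R}(6,5)=6$, so any orthoregular system of five integer vectors in $\Z^6$ extends to one of six. Thus suppose $w_1,\dots,w_6\in\Z^6$ are mutually orthogonal of length $\sqrt3$ and let $M$ be the integer $6\times 6$ matrix with columns $w_1,\dots,w_6$. Orthoregularity is precisely the identity $\t MM=3I_6$, which forces $(\det M)^2=3^6$, hence $|\det M|=27$. Now reduce modulo $3$: $\t MM=3I_6$ becomes $\t{\bar M}\,\bar M=0$ over $\mathbf F_3$, so the columns of $\bar M$ span a subspace of $\mathbf F_3^6$ that is totally isotropic for the standard form $q(x)=x_1^2+\cdots+x_6^2$. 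The decisive point is that $q$ is not split over $\mathbf F_3$: because $-1$ is a nonsquare modulo $3$ the binary form $x^2+y^2$ is anisotropic, and comparing discriminants gives $q\cong H\perp H\perp\langle1,1\rangle$ with $H$ the hyperbolic plane (three hyperbolic planes would have discriminant $-1\not\equiv 1$, while $\det q=1$). Hence $q$ has Witt index $2$, every totally isotropic subspace of $\mathbf F_3^6$ has dimension $\le 2$, and so $\rank_{\mathbf F_3}\bar M\le 2$. Consequently at least four elementary divisors of $M$ are divisible by $3$, whence $3^4\mid\det M$, contradicting $|\det M|=27$. This proves the assertion, and with it $E(6,k)\le 4$ for $1\le k\le 4$.

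The step I expect to be the real obstacle is the modular argument just sketched. The modulus is dictated by the length $\sqrt3$, but the difficulty is to wring from $\t MM=3I_6$ a rank bound over $\mathbf F_3$ tight enough to beat $v_3(\det M)=3$: the naive bound ``a totally isotropic subspace of a nondegenerate rank-$6$ space has dimension $\le 3$'' gives only $\rank_{\mathbf F_3}\bar M\le 3$ and just misses, so one must use the sharper fact that $q$ has Witt index $2$, not $3$, over $\mathbf F_3$. Equivalently, the obstruction is the failure of the $\Q$-isometry $\langle 1^6\rangle\cong\langle 3^6\rangle$, visible in the Hasse invariant at $3$; the computation above is merely a self-contained form of this. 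Incidentally the bound is sharp, $E(6,4)=4$, since the four vectors $v_1,iv_1,jv_1,kv_1$ admit no orthoregular extension in $\Z^6$.
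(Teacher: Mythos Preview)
Your argument is correct, but it is a genuinely different proof from the paper's. The paper works instead with $v_1={}^t(1,1,1,1,1,1)$ of length $\sqrt6$ and proceeds by a short combinatorial case analysis: any integer vector of squared length $6$ has components either all $\pm1$ or a permutation of $\pm(2,1,1,0,0,0)$, and one checks by hand that among vectors orthogonal to $v_1$ at most one of the first type and at most two (with disjoint supports) of the second type can be pairwise orthogonal, so $E(\{v_1\})\le 4$. Your route, by contrast, chooses length $\sqrt3$, promotes five vectors to six via Proposition~\ref{p:det}, and then reads off a contradiction from the Witt index of $\langle 1^6\rangle$ over $\mathbf F_3$ versus $v_3(\det M)$. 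The paper's proof is entirely elementary and needs no structure theory of quadratic forms; yours is more conceptual, pinpoints the obstruction as the non-splitness of $\langle 1^6\rangle$ over $\mathbf F_3$ (equivalently a local invariant at $3$), and would adapt more readily to other lengths or dimensions where a brute enumeration of short vectors is unappealing. One cosmetic remark: you do not actually need the quaternion construction for $k=1$---any single vector of squared length $3$ already suffices---but using it uniformly for $1\le k\le 4$ is harmless and tidy.
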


\begin{proof}
  Let $v_1=\t (1,1,1,1,1,1)$.
  It is enough to show $E(6,v_1)\leq 4$. 
  First observe that any integer vector who has the same length as $v_1$ satisfies 
either (a) the absolute value of any its component is $1$ or 
(b) the absolute values of its components are $2,1,1,0,0,0$. 

  If an integer vector satisfying (a) is perpendicular to $v_1$, then it has exactly 
$3$ components whose value is $-1$. 
  From this, we see that there is no orthoregular set $v_1, v_2, v_3$ such that 
both $v_2$ and $v_3$ satisfy (a). 

  On the other hand, if an integer vector satisfying (b) is perpendicular to $v_1$, 
then it satisfies 
(b${}'$) its components are $2,-1,-1,0,0,0$ or $-2,1,1,0,0,0$. 
Further, if both $v_2$ and $v_3$ satisfy (b${}'$) and they are perpendicular, 
their supports should be disjoint.
  From this, we see that there is no 
orthoregular set $v_1, v_2, v_3, v_4$ such that 
three of $v_2$, $v_3$, and $v_4$ satisfy (b). 

  Therefore there are no orthoregular set $v_1, v_2, v_3, v_4, v_5$, that is, 
$E(6,v_1)\leq 4$, as desired. 
\end{proof}

\noindent 
Remark: In fact, $E(6,v_1)=4$ as seen by 
$v_2=\t(1,1,1,-1,-1,-1), 
v_3=\t(2,-1,-1,0,0,0), 
v_4=\t(0,0,0,2,-1,-1)$.

\bigskip

\noindent Masanori Kobayashi

\noindent Department of Mathematics and Information Sciences \\ 
Tokyo Metropolitan University \\
1-1 Minami-Ohsawa, Hachioji, Tokyo, 192-0397 \\
Japan

\noindent kobayashi-masanori@tmu.ac.jp

\bigskip

\noindent Chikara Nakayama

\noindent Department of Economics \\ Hitotsubashi University \\
2-1 Naka, Kunitachi, Tokyo 186-8601 \\ Japan

\noindent c.nakayama@r.hit-u.ac.jp
\end{document}